\documentclass[]{article}
\usepackage{graphicx,enumerate}

\usepackage{jr,comment}
\usepackage[ruled,vlined]{algorithm2e} 
\usepackage{url}

\newtheorem{theorem}{Theorem}

\newtheorem{lemma}{Lemma}
\newtheorem{Assumption}{Assumption}

\newtheorem{example}{Example}
\def\bb#1{\mathbb{#1}}

\def\en{\infty}
\def\summ#1#2#3{\sum_{#1=#2}^{#3}}

\def\boldsymbol#1{\setbox\ewb\hbox{$#1$}%
    \setlength{\deno}{-\wd\ewb+0.05em}{ #1}\hspace{\deno}{#1}}
\begin{document}

%\baselineskip=23pt
%begin{frontmatter}

\title{Empirical Bayes Estimation  of \\the Mean of a Function of the Latent Variable  \\with Applications to the
Treatment of Nonresponse}
\pagestyle{myheadings}
\markright{SPMLE with nonresponse}
\author{  Eitan Greenshtein\footnote{Central Bureau of Statistics, Israel, \texttt{eitan.greenshtein@gmail.com}} and \jr\footnote{University of Michigan, USA, \texttt{yritov@umich.edu}}}

%eitan.greenshtein@gmail.com

\maketitle
{\bf Abstract}

We consider the estimation of linear functionals of the mixing distribution in a nonparametric empirical Bayes framework. Our main interest is in situations in which the mixing distribution is only partially identifiable, as may arise in complex sampling situations with nonresponse. We argue that estimating the functional by applying it to the semiparametric maximum likelihood estimator of the mixing distribution is an efficient tool, even when the maximum likelihood estimator is not unique. 

\noindent\emph{Some key words:} Empirical Bayes; G-modeling; semiparametric maximum likelihood. 
\begin{comment}
We consider a nonparametric empirical Bayesian framework. Let  $Y_i$ be random variables, $Y_i \sim f(y\mid \theta_i)$, $i=1,\dots,n$, where
$\theta_i \sim G$, and $\theta_i \in \Theta$ are independent. The observed variable $Y_1,\dots,Y_n $ are conditionally  independent
given the latent $\theta_1,\dots,\theta_n$.
The mixing distribution  $G$ is unknown and is assumed to belong  to a nonparametric class
 $\scg$.

Let $\eta(\theta)$  be a function of $\theta$. We address the problem of consistently estimating  $\eta_G\equiv \E_G \eta(\theta)  $. This problem becomes
particularly challenging when $G$ cannot be consistently estimated from the observed data.
We investigate the class of functions that can be identified. We argue that the semiparametric maximum likelihood estimator of $G$, although not consistent, is an efficient method to estimate parameters such as $\eta_G$. We suggest interval estimators, again based on the MLE of $G$ in partially identified situations. 

Our main motivation is estimation in the contexts involving nonresponse and missing data.
\end{comment}

\section{Introduction}

Our goal in this paper is to discuss the feasibility of the estimation of parameters of the latent distribution in a context of the nonparametric empirical Bayes (NPEB) model when this distribution is unidentifiable, and, in particular, under nonresponse. 
Let $\theta_1,\dots,\theta_n$ be unobserved i.i.d. from a distribution $G\in\scg$. Conditional on $\theta_1,\dots,\theta_n$, the random variables $Y_1,\dots,Y_n$ are independent, where $Y_i$ is sampled from a density with respect to dominating measure $\mu$  which depends only on $\theta_i$,  $Y_i \sim f(\cdot\mid \theta_i)$, $i=1,\dots,n$. Finally,
the parameter of interest is $\eta_{G}= \E_G \eta(\theta).$
Given an estimator $\hat{G}$ for $G$, a natural  estimator  for $\eta_G$,  is:
$ \eta_{\hat{G}}= \E_{\hat{G}} \eta(\theta)$.

We consider situations in which the $Y_i$s are only partially observed---for example, they may be truncated or censored. Let  $t(Y_1),\dots,t(Y_n)$ be the actual observations. The likelihood function is well defined since the family of distributions of $t(Y)$ is dominated, and therefore also its maximization with respect to $G$. We concentrate, therefore, on the semiparametric maximum likelihood estimator (SPMLE) of $G$ (see the formal definition in the sequel).   To simplify notations, we will often write $Y_i$ when there is no confusion.

 In what follows, consistency  of  a sequence of estimators $\hat{G}^n$ for $G$ is understood in the sense of  almost sure (a.s.) weak convergence;
we write $\hat{G}^n \Rightarrow G$.
In many cases, no consistent sequence  of estimators  $\hat{G}^n$ for $G$ exists, and the task of consistently estimating
$\eta_G$ appears infeasible.
 
However, if there were a sequence of estimators $\hat{G}^n$ converging weakly to $G$ almost surely, then the
corresponding sequence
of estimators $\eta_{\hat{G}}$ would be consistent for $\eta_G$, provided $\eta(\theta)$ is continuous
and bounded. Specifically, $\hat{G}^n \Rightarrow G$, implies
 $\eta_{\hat{G}^n} \rightarrow_{a.s.} \eta_G$.

%{\bf Remark.} In fact,  results about weak convergence of SPMLE $\hat{G}%^n$ to $G$ are only true  as almost sure convergence. Hence, the implied %convergence  of $\eta_{\hat{G}}$ to $\eta_G$, is convergence with %probability 1.
%For simplicity, we will slightly  abuse the statements  and drop the almost %sure
%statement, in most of the remaining of the paper.

In the following trivial example, we demonstrate the difficulty when there is no definite sequence $\hat{G}^n$ that converges weakly to $G$, and that, for at least some parameters, this may not be a real problem.

\begin{example} \label{ex:first}
Let  $\theta_i \sim G$ be i.i.d, $Y_i \sim B(1,\theta_i)$, $\theta_i \in [0,1]$, $i=1,\dots,n$.  Let $\bar Y_n\eqdef n^{-1}\summ i1n Y_i$. It can be easily shown that $\hat G$ is an SPMLE if, and only if,  $\E_{\hat G}\theta=\bar Y_n$.  Thus, if $\eta(\theta)=\theta$, then $\E_{\hat G}\eta=\bar Y_n$, and $\eta_{\hat G}$ is consistent. On the other hand, if  $\eta(\theta)\not\equiv\theta$, e.g., $\eta(\theta)=\theta^2$, $\eta_G$ is unidentified, and $\eta_{\hat G}$ depends on the specific $\hat G$. 

\end{example}

A key difference between the different functions mentioned in the example, is that
in the case $\eta(\theta)=\theta$ we have  $\eta(\theta)= \E_{\theta} h(Y)$ for $h(Y)=Y$.
Hence  $\E_G\eta(\theta)=\E_{G}  h(Y)$, in which case a natural unbiased estimator of $\eta_G$
is  
\begin{equation}\label{eqn:missing}
\hat\eta_G = \frac{1}{n} \sum_i h(Y_i).  
\end{equation}
Thus, in situations where $\eta_G=\E_G h(Y)$, we have the above natural estimator, which does not involve
estimating $G$. Greenshtein and Ritov (2022)  show that the above estimator and the estimator $\eta_{\hat{G}}$ are identical
in a more general setup.

The main focus of this paper is the estimation of $\E_G \eta(\theta)=\E_{G}  h(Y)$ in situations where $Y_i$ may be only partially observed, or even when there are missing not at random (MNAR) observations among the $Y_i$s.
 In such cases, the naive estimator \eqref{eqn:missing}, applied only to the complete observations, 
may be inconsistent.
We  demonstrate in this paper that, similarly to the situation in Example \ref{ex:first}, estimators of the form $\E_{\hat{G}}  \eta(\theta) = \eta_{\hat{G}}$ often
remain consistent, even when there are MNAR observations $Y_i$ and $G$ cannot be consistently estimated.

The problem of estimating  $\eta_G$, the mean of $\eta(\theta)$ under the mixture $G$, has scarcely been studied in the 
empirical Bayes literature. It has been examined in 
Greenshtein and Itskov (2018) and in Greenshtein and  Ritov (2022). In those papers,
we were intrigued by the surprisingly strong performance of 
the estimator $\eta_{\hat{G}}$ in situations where $G$ cannot be consistently estimated and is 
nonidentifiable. Understanding this phenomenon motivates the present study.

A related line of research exists in econometrics;  see, e.g.,  Dobronyi, Gu, and Kim (2021). In their work, the goal is to estimate functionals of a latent
distribution of latent variables (analogous to our mixture distribution of the parameters, 
which are `latent variables'), in a dynamic panel logit model. Their approach involves estimating moments of the latent distribution, and leveraging the relationship between moments of a distribution and the distribution itself (the moment problem). They also report surprisingly good performance of estimates of
functionals of the unknown distribution of the latent variables, in cases where that distribution is nonidentifiable from the observed data.

In cases with  MNAR observations, the available data are typically not representative of the population due to a selection bias. This phenomenon also occurs in observational studies.
The NPEB approach in the current paper is designed to correct such bias. The papers by Eckles, Ignatiadis, et al.  (2025) and by Robbins and Zhang (1991) (see also references therein) apply empirical Bayes ideas to handle selection bias due to ``Regression Discontinuity Designs". As an example, consider a clinical trial where it is desired to estimate the treatment effect of a new drug. However, suppose that only ``difficult cases" are assigned to the treatment group.

The paper of Vardi (1985) has a similar motivation of handling selection bias; see also Turnbull (1976).
In both of those papers the goal is to estimate the marginal distribution of $Y$.
The former handles selection bias in a general context; the latter is motivated by interval censoring in survival
analysis. Both essentially use SPMLE. We consider a selection bias that is due to nonresponse. The parametrization and setup in
the present paper differ substantially.

\section{Preliminaries }
\subsection{Basic definitions}

Given a mixing  distribution $G$ and a dominated family of distributions with densities {\nolinebreak$\{ f(\cdot\mid \theta):\; \theta \in \Theta \}$} with respect to some dominating measure $\mu$, define
$ f_G(y)= \int f(y\mid \vartheta) dG(\vartheta).$

 Given the observations  $Y_1,\dots,Y_n$,
the semiparametric maximum likelihood estimator (SPMLE) $\hat{G}$ of $G$  is any $\hat{G}$ satisfying
$\hat G = \argmax_{G \in \scg} \; \Pi_{i=1}^n f_{{G}}(Y_i)$. Since the distribution of $Y$ is dominated, we do not need the Kiefer and Wolfowitz (1956) device of the nonparametric MLE. 

Under the above setup, we say that the model is identifiable if for every
$G_1,G_2 \in \scg$, $f_{G_1}=f_{G_2}\text{ a.e. } \Rightarrow \; G_1=G_2.$ A parameter $\eta: \scg\to \R^p$ is identifiable if $G_1,G_i\in\scg$, $f_{G_1}=f_{G_2}\Rightarrow \eta_{ G_1}=\eta_{G_2}$. Clearly, a model can be unidentifiable while a specific parameter of the model is identifiable. In that case, we may refer to the model as a partially identifiable model. 

Revisiting  Example \ref{ex:first}, in light of these definitions, it may be seen that in this 
example, any two distributions $G_1$ and $ G_2$  with the same mean cannot be distinguished. Consequently, regardless of the 
sample size, the true distribution $G$ cannot be consistently estimated. However, $\eta(G)=\E_G(p)$ is identifiable.

\subsection{Empirical Bayes: a brief review.}
The idea of empirical Bayes was suggested by Robbins (see  Robbins 1953, 1956, 1964).
In parametric empirical Bayes, the prior (or mixing distribution) $G$ is assumed to belong to a parametric family of distributions, and the task
is to estimate $G$ based on the observed $Y_1,\dots, Y_n$.
For example, consider the case  $$f(y\mid \theta)=N(\theta,1), \; \theta \in \Theta \subseteq \R,$$ 
 with $\scg\ =\{N(0,\sigma^2), \; \sigma^2 \in \R_+ \}$, where $\sigma^2$ is unknown. 

 In nonparametric 
empirical Bayes, the prior $G$ is allowed to be any distribution on the parameter set $\Theta$.
Two main approaches exist: 
\begin{enumerate}
  \item   {\bf  $f$-modelling approach:} This approach estimates $f_{G}(y)$-the marginal density of $Y$, directly without estimating $G$.

  \item     {\bf   $G$-modelling  approach:} This approach estimates $G$ directly. 

\end{enumerate}

See Efron (2014) for 
further discussion of these two approaches. 
 For reviews on empirical Bayes and its applications see
 e.g.,  Efron (2010),   
Zhang (2003), and Koenker and Gu (2026).

In nonparametric  $G$-modelling, $G$ is typically estimated using SPMLE. Traditionally, 
the computation of  SPMLE was carried out via the EM algorithm, see Laird (1978). More recently, Koenker and Mizera (2014) proposed computing using convex optimization techniques.

%{\bf Estimating the individual parameters $\boldsymbol{\theta}_i$.}
The more common task in empirical Bayes is the point estimation of the individual parameters $\theta_i$,  where $Y_i \sim f(y\mid \theta_i)$, $i=1,\dots,n$. However, in this paper, we emphasize the estimation of the mean of 
various functions $\eta(\theta)$ under the mixture $G$. Indeed, the more common task is problematic without an identifiability assumption. A natural estimator of the  individual parameters $\theta_i$, $i=1,\dots,n$, based on SPMLE $\hat{G}$, is through:
$\hat{\theta}_i= \E_{\hat{G}} (\theta_i\mid  Y_i).$
The difficulty under non-unique and inconsistent $\hat{G}$ is illustrated with Example \ref{ex:first}. Clearly, $\E_{\hat G}(\theta\mid Y{=}1)=\int \theta^2 d\hat G(\theta)/\int \theta d\hat G(\theta)=\int \theta^2 d\hat G(\theta)/\bar Y_n$, which depends on the specific $\hat G$, since $\hat G$ is only restricted by $\int\theta d\hat G(\theta)=\bar Y_n$. The above ambiguity remains regardless of how large  $n$ is.

It is worth noting that  Greenshtein and Ritov (2022), 
show the following identity: 
$$\eta_{\hat{G}}=\frac{1}{n} \sum_j \E_{\hat{G}}(\eta(\theta_j)\mid Y_j),$$ holds for any SPMLE $\hat{G}$.

\subsection{Overview and further preliminaries} \label{sec:over}

To illustrate the key ideas, consider a simple scenario. Suppose the goal is to estimate the proportion
of unemployed individuals,
based on a sample of size $n$. 
Let $Y_i=1$ if individual $i$ is unemployed, $Y_i=0$ otherwise. 
The obvious estimator is $ \sum Y_i/n$. However, this estimator becomes problematic if there are some nonresponse cases (or missing values), particularly if individuals that did not respond are MNAR. For instance, unemployed individuals are more likely not to respond.

Another example, suppose that $Y_i \sim f(y\mid \theta_i)$.  Let $I_i$ be the indicator of the event:
``a response was obtained from individual $i$", let $h(Y_i)$ denote, for example,  the salary associated with 
individual  $i$.  
The goal is to estimate 
$$\E_{G} h(Y) =\E_G E(h(Y)\mid  \theta)=\E_G \eta(\theta) \equiv \eta_G,$$
where $ \eta(\theta)=E(h(Y)\mid \theta).$
%Defining $\eta(\theta)=\E_G(h(Y)\mid \theta)$ we obtain $\E_G h(Y)= \E_G \eta(\theta)\equiv \eta_G$.
When there is a positive correlation, under $G$, between $I$ and $h(Y)$, a positive bias
of the naive estimator is expected.
Specifically, suppose $\sum I_i>0$; then  the naive estimator, ${  \sum_i h(Y_i) I_i}/{\sum I_i},$
is clearly biased for $\E_G h(Y)$.

Neutralizing the correlation when conditioning on $\theta_i$ aligns with the principle of missing at random (MAR),
as discussed in Little and Rubin  (2002), where conditioning is performed on known strata. Our situation is more complex since the ``strata" (parameters)
are unknown and unobserved.

The current paper deals in particular with sampling situations where nonresponse is considerable and cannot be ignored. In particular, we argue:
\begin{enumerate}
  \item In Lemma \ref{lem:main}, Section \ref{sec:uniqueness}, and Lemma \ref{lem:mainTruncated}, Section \ref{sec:trunc}, we show that the SPMLE of the mixing distribution, even if it is not unique as an estimator of the distribution, is well-defined as an estimator of the observed distribution. 
  \item In Section \ref{sec:spmleConsistent}, we also discuss its convergence rate in total variation.
  \item Assumption \ref{ass:hfun} and Theorem \ref{th:funct}, Section \ref{sec:ident}, characterize the functionals that can be estimated as a limit of the observed density at a finite number of points.
  \item We, thus, argue that even when the SPMLE is not unique, and the mixing distribution is unidentifiable, the functionals of the SPMLE are good estimators for identifiable functionals. 
  \item In Section \ref{sec:CI}, we discuss the partially identified situation and the use of the SPMLE in such situations. We suggest CIs that include the unidentifiable range.
  \item We exemplify the concepts discussed in this paper in different sampling examples. See Sections \ref{sec:ex} and \ref{sec:ident}. A simulation study is given in Section \ref{sec:CI}, Example \ref{ex:sim}, and real data is analyzed in Example \ref{ex:data}.
  
\end{enumerate}

\section{Examples.} \label{sec:ex}

The next example was considered in Greenshtein and Itskov (2014).

\begin{example} \label{ex:GI}
Consider a survey in which each sampled item is approached up to $K$
times. If no response is obtained within $\kappa$ attempts, the outcome is recorded as ``nonresponse." The  response  $X$
takes the values, $X=x_1,\dots,x_S$, e.g., $x_1=\mbox{"employed"}, \; x_2=\mbox{ "partially employed"}, \; x_3=\mbox{ "unemployed"}$, as in Greenshtein and Itskov (2014). 
Let $K_i$  be the number of attempts until a response; let $Y_i= (X_i, K_i)$. which is observed when $K_i\le \kappa$. Thus,  we observe
$t(Y_i)$, where $t(Y_i)=(X_i,K_i)$  if  $K_i \leq K$ and   $t(Y_i)=\mbox{\rm``nonresponse"}$  otherwise.
 leading to
$\kappa S +1$ possible  observed values.

We assume that the observed number of attempts $K_i$ 
is a truncated geometric random variable, with parameter $\pi_i$. The probability of $X_i=x_s$ is $p_{is}$, $ s=1,2,\dots,S$. Let $\theta_i=(\pi_i, p_{i1},\dots,p_{iS})$. Assume that conditional on $\theta_i$, $K_i$ is independent of $X_i$.
% Extending the model too much may result in overfitting.

The goal is to estimate $\E_G \eta(\theta)$ where $\eta(\theta_i)=(p_{i1},\dots,p_{iS})$. This provides an estimate of the population proportion of items with $X=x_s$, e.g., the proportion of unemployed individuals in the population. We assume that the tendency to respond and the tendency to be employed may be correlated. However, we assume that they are independent given the latent variables. Thus, we do not permit situations in which unemployed people are shy to respond \emph{because} they are unemployed, or employed workers are harder to sample because they are busy.

\end{example}
The following example was investigated in Greenshtein and Ritov (2022).

\begin{example} \label{ex:GR}
 
To estimate the population's proportion of (say) unemployed, a sample is designed
in $n$ ``small areas/strata". Strata are chosen to be small so that conditional on the strata, 
the ``missing at random"  assumption is approximately satisfied. 
A sample of size $\kappa$ is taken from each of the $n$ strata. Let $K_i$ be the number of responses 
in stratum $i$, $i=1,\dots,n$. The possible  observed outcomes are $(X_i, K_i)$, $i=1,\dots,n$, where $X_i$ is the number of unemployed among the $K_i$ responders, while $K_i=0,\dots,\kappa,$ is the number of responders in stratum $i$.

 Assume  the conditional 
distribution of $X_i$  given $K_i$ is binomial, $B(K_i, p_i)$, while $K_i \sim B(\kappa,  \pi_i)$.
Define  $\theta_i=(\pi_i,p_i)$,
and $\eta(\theta_i)= p_i$.  The population's unemployed proportion equals $\E_G \eta(\theta)$   (assuming strata are of equal size). The difficulty arises when $K_i=0$ for some strata. In such cases, the obvious estimator 
$n^{-1} \sum_i {X_i}/{K_i}$ cannot be applied.

 A similar setup is suitable in observational studies, where $K_i$,   the number of observed individuals in 
stratum $i$ within one unit of time, follows a  $Po(\lambda_i)$ distribution. This setup occurs, for example, when estimating
the spread of a disease, based on the proportion, $p_i$, of infected individuals in each stratum, $i$, $i=1,\dots,n$.
The estimator is based on a ``convenience sample" from small strata, of people who underwent
(possibly unrelated) blood tests.
The available data include the number of tested individuals in each stratum and the corresponding test results.
Strata are chosen to be ``small,"  so that the sampled/tested
individuals in a stratum are reasonably representative of non-sampled individuals.
Here the parameters are  $\theta_i=(\lambda_i,p_i)$ and $ \eta(\theta_i)=p_i.$
As before, the difficulty stems from strata with $K_i=0$        . 

Note, in the current case, where $K_i$ is distributed  Poisson, Greenshtein and Ritov (2022) proved that $\hat{G}^n \Rightarrow G$. Thus, the good simulation and real data analysis results, reported in Greenshtein and Ritov (2022), are less surprising.

\end{example}

In these examples, we differentiate between two types of missing observations. They can be censored or truncated. In the context of Example \ref{ex:GR}, it may be that all strata are observed, whether $K_i>0$ or not. If $K_i=0$, we know that there were no tested individuals from stratum $i$, and therefore we do not obtain any information about the prevalence of the condition in the stratum. In the truncated situation, we do not know about the existence of the stratum unless $K_i>0$ patients from the stratum get to the clinic.

 \section{On the asymptotics of the SPMLE of the observed density and its functionals}. \label{sec:theory}

\subsection{Uniqueness of  $f_{\hat{G}} (y_i)$ for the observed $y_i$.} 
\label{sec:uniqueness}

In the remainder of the paper $\hat{G} \equiv \hat{G}^n$ denotes an SPMLE based on the observed sample $t(Y_1),\dots,t(Y_n)$. When $\hat{G}$
is not unique, we refer to {\it any} SPMLE, and any corresponding estimator 
$\eta_{\hat{G}}$. It was pointed out in Ritov and Greenshtein (2022) that by considering the derivative of the likelihood of the submodel $dG_t (\th)=\bigl(1+t(\eta(\th)-\eta_{\hat G})\bigr)d\hat G(\th)$, $t\in(-\eps,\eps)$, which should be $0$ at $t=0$, we obtain the self-consistency equation:
 \eqsplit{
    \eta_{\hat G} &= \frac{1}{n}\summ i1n\E_{\hat G}\bigl(\eta(\th)\mid Y_i\bigr). 
  }
 However, self-consistency is not consistency, and in particular the connection is problematic when $\hat G$ is not unique.

To simplify notation, we do not distinguish between $Y_i$ and $t(Y_i)$. Our fundamental lemma is:

\begin{lemma} \label{lem:main}
Let $\hat{G}_0$ and $\hat{G}_1$ be two SPMLEs. Then: 
\begin{equation} \label{eq:lem1}
f_{\hat{G}_0}(Y_i)=f_{\hat{G}_1}(Y_i), \qquad i=1,\dots,n.
\end{equation}
\end{lemma}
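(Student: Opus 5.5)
The plan is to use the convexity of $\scg$ (the class of all mixing distributions on $\Theta$, hence convex) and the strict concavity of the logarithm. The key observation is that $G\mapsto f_G(y)$ is linear in $G$ for each fixed $y$. Hence the log-likelihood
\[
\ell(G)=\sum_{i=1}^n \log f_G(Y_i)
\]
is a concave functional of $G$ on $\scg$. It is strictly concave along any segment on which the vectors $(f_G(Y_1),\dots,f_G(Y_n))$ actually vary.

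The argument proceeds as follows. Let $\hat G_0$ and $\hat G_1$ be two SPMLEs, and write $L^*=\ell(\hat G_0)=\ell(\hat G_1)$ for the maximal value. Since $L^*>-\infty$, we have $f_{\hat G_j}(Y_i)>0$ for all $i$ and $j=0,1$. For $t\in[0,1]$, form the mixture $G_t=(1-t)\hat G_0+t\hat G_1\in\scg$. By linearity,
\[
f_{G_t}(Y_i)=(1-t)f_{\hat G_0}(Y_i)+t f_{\hat G_1}(Y_i).
\]
Apply strict concavity of $\log$ term by term:
\[
\log f_{G_{1/2}}(Y_i)\ \ge\ \tfrac12\log f_{\hat G_0}(Y_i)+\tfrac12\log f_{\hat G_1}(Y_i).
\]
Equality holds if and only if $f_{\hat G_0}(Y_i)=f_{\hat G_1}(Y_i)$. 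Summing over $i$ gives $\ell(G_{1/2})\ge L^*$, with strict inequality whenever the two values differ for at least one $i$. That strict case would contradict the maximality of $L^*$. Therefore equality holds for every $i$, which is \eqref{eq:lem1}.

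The only points needing care are bookkeeping rather than a real obstacle. First, $\scg$ must be convex; this holds for the nonparametric class of all distributions on $\Theta$, and I would state it as a standing assumption. Second, $Y_i$ here means $t(Y_i)$, with $f_G$ the density of $t(Y)$ with respect to its dominating measure. This density is still $\int f(t(y)\mid\vartheta)\,dG(\vartheta)$ for the appropriate kernel, for example the probability of nonresponse for a censored observation, so it is still linear in $G$. Third, the maximized likelihood must be finite and positive, which is implicit in the existence of the SPMLE. Ties among the $Y_i$ cause no difficulty, since the argument is pointwise in $i$.
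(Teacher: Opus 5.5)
Your proof is correct and follows essentially the paper's route: form the mixture $(1-\lambda)\hat G_0+\lambda\hat G_1$ and use that the log-likelihood along this segment is concave, and strictly so unless the values $f_{\hat G_0}(Y_i)$ and $f_{\hat G_1}(Y_i)$ coincide. The paper argues through the second derivative of $\chi(\lambda)=\ell(G_\lambda)$, whereas you apply strict concavity of $\log$ at the midpoint, which says the same thing slightly more elementarily; your remarks on convexity of the class of mixing distributions and finiteness of the maximal likelihood state assumptions the paper leaves implicit.
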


Lemma \ref{lem:main}  is related to  Theorem 18  of Lindsay (1995). However, our proof
requires weaker assumptions and appears simpler and more directly related to the present paper.

 \begin{proof}
Recall:  
\begin{equation} \label{eqn:L}
\hat{G}=\argmax_{G} \Pi_i  f_G(Y_i) \equiv \argmax_{G} \log(\Pi_i  f_G(Y_i)) 
\equiv \argmax_G \ell(G),
\end{equation}
where
 \eqsplit{
    \ell(G)&= \log\Bigl(\summ i1n \log\int f(Y_i\mid\th)dG(\th)\Bigr)
    \\
    &=\log \Bigl(\summ i1n f_G(Y_i)\Bigr) .
  }
Suppose $\ell(G)$ is maximized by $G_0$ and $G_1$. Consider $G_\lm = (1-\lm)G_0+\lm G_1$. Then $f_{G_\lm}(Y)=(1-\lm)f_{G_0}(Y)+\lm f_{G_1}(Y)$, hence $\chi(\lm)\equiv \ell(G_\lm)$ is a concave function on $[0,1]$ with a second derivative
 \eqsplit{
    \chi''(\lm)&= -\summ i1n  \Bigl(\frac{f_{G_1}(Y_i)-f_{G_0}(Y_i)}{(1-\lm)f_{G_0}(Y_i)+\lm f_{G_1}(Y_i)}\Bigr)^2.
  }
By assumption, $\chi$ is maximized at $0$ and $1$, which can happen if, and only if, $\chi''(\lm)=0$ for $\lm\in(0,1)$ implying $f_{G_0}(Y_i)=f_{G_1}(Y_i)$, $i=1,\dots,n$.

\end{proof}

 \subsection{The truncated and censored cases.}\label{sec:trunc}

In the case where nonresponse is censored,  for items that did not respond, we do know that their corresponding value  $y$ 
satisfies
$y \notin A$. Our observations are the pairs $(Y_iI_i, I_i)$, $i=1,\dots,n$.

In the truncated setup,  we do not know about observations $Y    \notin A$.   As examples, consider the well-known models and problems: capture-recapture; how many words did Shakespeare know? and the number of unseen species. In those examples, we do not know about 
unobserved items.
The corresponding density of an observed value is
$f^A(y\mid\theta)=f(y\mid\theta)\ind(y\in A)/P_\theta(A)$. However, a latent $\theta$ that is truncated is not expressed in the sample. Thus, we have a weighted sampling from $G$, and the observed density is 
 \eqsplit[eq:trun]{
    f_G(y) &= \frac{\int f(y\mid\theta)dG(\theta)}{\int P_\theta(A)dG(\theta)} 
    \\
    &= \frac{\int f^A(y\mid\theta)P_\theta(A)dG(\theta)} {\int P_\theta(A)dG(\theta)}
    \\
    &= \int f^A(y\mid\theta)dG^A(\theta),
  }
where   $dG^A(\theta)=P_\theta(A)dG(\theta)/\int P_\vartheta(A)dG(\vartheta)$.

The extension of Lemma  \ref{lem:main}  to the censored case is immediate; it is the same mixture structure on the censored sample space with density $f_G(y)\ind(y\in A)\times P_G(A^{c})$. However, in the truncated case the density is not linear in $G$ and hence the argument of Lemma \ref{lem:main} fails. But the claim can be extended:

\begin{lemma}
   \label{lem:mainTruncated}
   Suppose $\hat G_1$ and $\hat G_2$ are SPMLEs in either the censored or the truncated cases. Then, in the censored case $f_{\hat G_1}(Y_i)I_i=f_{\hat G_2}(Y_i)I_i$, $i=1,\dots,n$,  and if there is at least one censored observation, then $P_{\hat G_1}(A^c)=P_{\hat G_2}(A^c)$. In the truncated case,  $f_{\hat G_1}(Y_i)/P_{\hat G_1}(A)=f_{\hat G_2}(Y_i)/P_{\hat G_2}(A)$.
\end{lemma}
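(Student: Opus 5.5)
For the censored case I would reduce directly to Lemma \ref{lem:main}. The likelihood of the pairs $(Y_iI_i,I_i)$ is
\[
\prod_{i:I_i=1} f_G(Y_i)\prod_{i:I_i=0} P_G(A^c),
\qquad P_G(A^c)=\int P_\theta(A^c)\,dG(\theta).
\]
Both factors are linear in $G$. So for two maximizers $\hat G_1,\hat G_2$ the function $\chi(\lambda)=\ell\bigl((1-\lambda)\hat G_1+\lambda\hat G_2\bigr)$ is again a sum of logarithms of affine functions of $\lambda$. It is therefore concave, and its second derivative is minus a sum of squared ratios. There is one term for each observed $Y_i$ and one term for each censored observation.

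Since $\chi$ is maximal at both endpoints, concavity forces $\chi$ to be constant on $[0,1]$. Hence every squared term vanishes. This gives $f_{\hat G_1}(Y_i)=f_{\hat G_2}(Y_i)$ whenever $I_i=1$, and $P_{\hat G_1}(A^c)=P_{\hat G_2}(A^c)$ as soon as some $I_i=0$. That is exactly the censored claim.

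For the truncated case, the observed density \eqref{eq:trun} is not linear in $G$, so I would reparametrize. By \eqref{eq:trun}, $f_G(y)=\int f^A(y\mid\theta)\,dG^A(\theta)$, which is linear in $G^A$. The plan is to show that the map $G\mapsto G^A$ is a bijection between $\scg$ and the set of distributions $\tilde G$ that are absolutely continuous with respect to $G$-type measures. Its inverse is
\[
dG\propto P_\theta(A)^{-1}\,d\tilde G ,
\]
which is defined when $P_\theta(A)>0$. This is harmless, because parameters with $P_\theta(A)=0$ contribute nothing to the observed likelihood. I would assume that $\scg$ is closed under this reweighting; this holds for the nonparametric class of all distributions on $\Theta$.

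Under the bijection, maximizing $\prod_i f_G(Y_i)$ over $G$ is equivalent to maximizing $\prod_i\int f^A(Y_i\mid\theta)\,d\tilde G(\theta)$ over $\tilde G$. This is an ordinary mixture likelihood. If $\hat G_1,\hat G_2$ are SPMLEs, then $\hat G_1^A,\hat G_2^A$ are maximizers of that mixture likelihood. Lemma \ref{lem:main}, applied to the family $f^A$, then gives $\int f^A(Y_i\mid\theta)\,d\hat G_1^A=\int f^A(Y_i\mid\theta)\,d\hat G_2^A$. By \eqref{eq:trun} this is precisely $f_{\hat G_1}(Y_i)/P_{\hat G_1}(A)=f_{\hat G_2}(Y_i)/P_{\hat G_2}(A)$, if the normalization in the statement is read accordingly.

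The main obstacle is justifying the reparametrization. The class of achievable $G^A$ must be convex, so that the mixing step in Lemma \ref{lem:main} stays inside the model. It must also be exactly the image of $\scg$; in particular, the inverse reweighting by $1/P_\theta(A)$ has to yield a probability measure, which means $\int P_\theta(A)^{-1}\,d\tilde G<\infty$. I would first handle the case $\inf_\theta P_\theta(A)>0$, where all of this is automatic.

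For the general case, I would note that the convex combination can be taken directly in the $\tilde G$ coordinates, namely $(1-\lambda)\hat G_1^A+\lambda\hat G_2^A$. Its preimage is the mixture of $\hat G_1$ and $\hat G_2$ with weights proportional to $(1-\lambda)/P_{\hat G_1}(A)$ and $\lambda/P_{\hat G_2}(A)$. That preimage is an honest element of a convex $\scg$, so integrability holds automatically.
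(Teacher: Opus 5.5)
Your proposal is correct and follows the paper's proof. The censored case is the concavity argument of Lemma~\ref{lem:main} with one extra factor $P_G(A^c)$ per censored observation. The truncated case rewrites the observed likelihood via \eqref{eq:trun} as an ordinary mixture in $G^A$ with kernel $f^A$; the paper justifies this by asserting that the set $\{\int dG(\theta)/P_\theta(A)<\infty\}$ of reweighted distributions is convex. Your explicit preimage of $(1-\lambda)\hat G_1^A+\lambda\hat G_2^A$ as a mixture of $\hat G_1,\hat G_2$, with weights proportional to $(1-\lambda)/P_{\hat G_1}(A)$ and $\lambda/P_{\hat G_2}(A)$, makes that convexity concrete and works for any convex $\scg$. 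It also makes your bijection claim unnecessary---which is just as well, since $G\mapsto G^A$ is not injective when $G$ charges $\{\theta: P_\theta(A)=0\}$; all you need is that the likelihood depends on $G$ only through $G^A$.
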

\begin{proof}
  The censored case is simple and already argued. For the truncated case, note that the set $\{G^W\}=\{G\text{ a distribution on } \Theta,\: \int dG(\theta)/ P_\theta(A)<\infty\}$ is convex, and therefore the argument of Lemma \ref{lem:main} follows \eqref{eq:trun} with the kernel $f^T(y\mid\theta)$.
\end{proof}

\subsection{On the consistency of the $f_{\hat G}$}
\label{sec:spmleConsistent}

\begin{comment}
Some conditions are needed to ensure the consistency of the SPMLE. In particular, consistency may fail if the distribution has very long tails. For example, suppose $f_G$, for a fixed $G$, is supported on the positive integers and  for large enough $j$: $ f_G(j)=1/\bigl(j\log^2(j)\bigr)$, then $\summ jm\en f_G(j)=\O(1/\log m)$. This means that the largest order statistic  has the order $\OP({F_G^{-1}\bigl((n-1)/n\bigr)})=\OP({e^{n}})$. and thus its contribution to the average slog-likelihood is of order $ \OP({-\log(e^nn^2)/n}) =\OP({-1})$. If the family $\scg$ is rich enough to have $O(1)$ density at any $j$, the SPMLE may well be inconsistent.

To simplify the argument, we consider the following relatively strong assumption to ensure consistency:

\begin{Assumption}
  [\bf Subgeometric distribution]\label{ass:subgeom} $Y$ is discrete with support $y_1,y_2,\dots$, and $f_G$ has bounded from below hazard rate:
   \eqsplit{
    \rho\equiv\inf_m \frac{f_G(y_m)}{\summ jm\en f_G(y_j)}>0.
    }    
\end{Assumption}

\end{comment}
\def\df{\ensuremath f\hspace{-0.7em}f}

The SPMLE is consistent, and under some conditions it is even with ensured rate:

\begin{theorem}\label{thm:conv}
     Suppose $\theta\dist G$ and $Y$ is discrete with support $y_1,y_2,\dots$. Let:
    \eqsplit{
    \nu(\eps)&\eqdef\summ j1\en \ind(f_G(y_j)\ge\eps),\qquad\text{and}
    \\ 
    \xi(\eps)&\eqdef\summ j1\en f_G(y_j)\ind(f_G(y_j)\le\eps).
    }Then
     \eqsplit{
       \|f_{\hat G}-f_G\|_{TV} &=  \OP({\sqrt{\frac{\xi(L_n/n)}{\eps}+\frac{\nu(1/n)}{n\eps}}+\xi(\eps)})
       \cip 0,
      }
 where $L_n$ is some slowly growing to infinite sequence.     

  If further, $Y$ has sub-geometric tails: 
  \eqsplit[ass:subgeom]{
    \rho\equiv\inf_m \frac{f_G(y_m)}{\summ jm\en f_G(y_j)}>0.
    }    
%\eqref{ass:subgeom} holds. 
  Then $\|f_{\hat G}-f_G\|_{TV}=\OP({n^{1/3}})$.
\end{theorem}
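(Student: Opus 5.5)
The plan is to compare $\hat f\equiv f_{\hat G}$ with the true $f\equiv f_G$ through the basic likelihood inequality. Since $\hat G$ maximizes $\ell$, we have $\sum_i\log\bigl(\hat f(Y_i)/f(Y_i)\bigr)\ge 0$. By concavity, the same holds for the midpoint $\bar f=(\hat f+f)/2$, and $\log(\bar f/f)\ge\frac12\log(\hat f/f)$ pointwise. Writing $\hat p_j$ for the empirical frequency of $y_j$ and $f_j=f(y_j)$, this gives
\[
\sum_j \hat p_j\log\frac{\bar f_j}{f_j}\ge 0 .
\]
The midpoint trick matters because the ratio $\bar f_j/f_j\ge 1/2$ is bounded below. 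The quantity $-\sum_j f_j\log(\bar f_j/f_j)$ is a Kullback--Leibler divergence, and it dominates the squared Hellinger distance $h^2(\bar f,f)$, which in turn controls $\|\hat f-f\|_{TV}^2$ up to constants. So the inequality reduces to bounding the empirical process
\[
\sum_j(\hat p_j-f_j)\log\frac{\bar f_j}{f_j}\ \ge\ c\,\|\hat f-f\|_{TV}^2 .
\]

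I would split the support into three parts.

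\emph{Heavy points}, $J_1=\{j: f_j\ge\eps\}$. Here $|\log(\bar f_j/f_j)|\lesssim|\hat f_j-f_j|/f_j$, with the upper side kept in check by $\bar f_j\le (1+f_j)/2$. Cauchy--Schwarz then bounds the contribution by
\[
\Bigl(\sum_{J_1}(\hat p_j-f_j)^2/f_j\Bigr)^{1/2}\Bigl(\sum_{J_1}(\hat f_j-f_j)^2/f_j\Bigr)^{1/2}.
\]
The first factor is $\OP({\sqrt{\nu(\eps)/n}})$ because $\E(\hat p_j-f_j)^2/f_j\le 1/n$. The second factor is at most $\eps^{-1/2}$ times an $\ell_2$ distance that is bounded by TV. This is where the factors $1/\eps$ in the statement come from.

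\emph{Light but not negligible points}, $L_n/n\le f_j<\eps$. Only the upper tail of $\log(\bar f_j/f_j)$ is dangerous here, and it is at most $\log(1/(2f_j))$. A variance bound together with Bernstein-type control gives a contribution of order $\sqrt{\xi(L_n/n)/\eps}$, after absorbing $\nu(1/n)/(n\eps)$ for points with $f_j\ge 1/n$.

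\emph{Negligible points}, $f_j<L_n/n$. With probability tending to one, the observed points in this region are few. I would show that $\sum_{f_j<\eps}|\hat f_j-f_j|\le \hat P(\text{light})+P(\text{light})\lesssim\xi(\eps)$ plus the fitted mass. I would handle that fitted mass using the fact that $\hat f$ integrates to one and the likelihood inequality penalizes putting mass there.

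Combining the three pieces yields $\|\hat f-f\|^2_{TV}\lesssim$ (the stated rate)$\cdot\|\hat f-f\|_{TV}+$ remainder, which solves to the displayed bound. Consistency then follows by letting $\eps\to0$ slowly: $\xi(\eps)\to0$ and $\xi(L_n/n)\to0$, while $\nu(1/n)/n\to0$ because $\nu(1/n)\le\#\{j:f_j\ge 1/n\}=o(n)$ by summability.

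For the sub-geometric case, the hazard condition \eqref{ass:subgeom} gives tails $\sum_{j\ge m}f_j\le(1-\rho)^{m-1}$. Hence $\nu(\eps)=O(\log(1/\eps))$ and $\xi(\eps)=O(\eps)$. Substituting gives a bound of order
\[
\sqrt{L_n/(n\eps)+\log n/(n\eps)}+\eps ,
\]
and choosing $\eps\asymp n^{-1/3}$ up to logarithms yields the rate $n^{-1/3}$. I read the statement's $\OP({n^{1/3}})$ as a typo for $n^{-1/3}$.

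The main obstacle is the light/negligible region. There the log-ratio is unbounded and the MLE may place mass at unobserved points, so the empirical-process bound needs the truncation at $L_n/n$ and care to avoid losing more than a logarithmic factor.
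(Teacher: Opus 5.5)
Your route differs from the paper's. The paper sandwiches $\ell(f_{\hat G})$ between $\ell(f_G)$ and the log-likelihood of the empirical pmf $\hat p$, then Taylor-expands around $\hat p$ on $\{j:\hat p_j>\varepsilon\}$; you use a midpoint basic inequality instead. Your heavy-point step is sound, but there is a genuine gap in the negligible region.

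The empirical-process side of your basic inequality contains $\sum_{f_j<L_n/n}\hat p_j\log(\bar f_j/f_j)$, and your paragraph on negligible points never bounds it. It bounds a different quantity, $\sum_{f_j<\varepsilon}|\hat f_j-f_j|$. ``Few observed points'' does not suffice, because each of them contributes up to $n^{-1}\log\{(1+f_j)/(2f_j)\}$. The best this gives is $O_P\bigl(\xi(L_n/n)\log(1/\tau_n)\bigr)$ with $\xi(\tau_n)=o(1/n)$, and without a tail condition $\log(1/\tau_n)$ can exceed $n$. For instance, if $f_G(y_j)\asymp 1/(j\log^2 j)$, then $P\{\log(1/f_G(Y))>z\}\asymp 1/z$, and $n^{-1}\sum_i\log\{1/f_G(Y_i)\}\mathbf{1}\{f_G(Y_i)<L_n/n\}$ grows like $\log n$. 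This is not mere slack in the envelope: for Poisson mixtures the first-order condition gives $\hat f_j\ge\hat p_j\sup_\theta f(y_j\mid\theta)$, so $\log(\bar f_j/f_j)$ can be of order $n$ at the largest observation. Hence your outline does not establish the first, tail-free claim.

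The paper's sandwich meets exactly the same term, written there as $\xi(L_n/n)\log(\cdot)$. It controls it only by invoking the sub-geometric condition and then dropping the logarithm, so there is nothing to borrow there. Under that condition your plan does go through: with probability tending to one every $Y_i$ has $f_G(Y_i)\ge\rho/(nL_n)$, so the term is $O_P(L_n\log n/n)$.

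In the light region, note that $\log(\bar f_j/f_j)$ depends on the data, so a variance bound for a fixed function does not apply as stated. The data-free envelope $(\hat p_j-f_j)_+\log\{(1+f_j)/(2f_j)\}+(f_j-\hat p_j)_+\log 2$ gives an additive remainder of order $\sum_{L_n/n\le f_j<\varepsilon}\sqrt{f_j/n}\,\log(1/f_j)$, not the $\sqrt{\xi(L_n/n)/\varepsilon}$ you announce. Under sub-geometric tails it still yields $n^{-1/3}$ up to logarithms (your reading of the exponent is right).

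The cleanest repair is to replace the logarithm by a bounded weight. Since $\bar f$ lies in the model, $0\le\sum_j\hat p_j\log(\hat f_j/\bar f_j)\le\sum_j\hat p_jw_j$ with $w_j=(\hat f_j-f_j)/(\hat f_j+f_j)\in[-1,1]$. Moreover $\sum_jf_jw_j=-\tfrac12D$, where $D=\sum_j(\hat f_j-f_j)^2/(\hat f_j+f_j)\ge\tfrac12\|\hat f-f\|_1^2$. Hence, for every $\delta>0$,
\[
\tfrac12D\le\sum_j(\hat p_j-f_j)w_j\le\Bigl(\sum_{f_j\ge\delta}\frac{(\hat p_j-f_j)^2}{f_j}\Bigr)^{1/2}D^{1/2}+\sum_{f_j<\delta}|\hat p_j-f_j| ,
\]
where the first factor is $O_P(\sqrt{\nu(\delta)/n})$ and the last sum has mean at most $2\xi(\delta)$. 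Thus $\|f_{\hat G}-f_G\|_1=O_P\bigl(\sqrt{\nu(\delta)/n+\xi(\delta)}\bigr)$. Taking $\delta=1/n$ gives a bound dominated by the first display of the theorem, hence consistency with no tail condition. Under the sub-geometric condition the same choice gives $O_P(\sqrt{\log n/n})$, which is stronger than $n^{-1/3}$. This keeps your midpoint idea but removes the unbounded tail term that both your outline and the paper's argument stumble on.
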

\begin{proof}

  Let $\df_n$ be the empirical density function of $Y$. Since $\df_n$ maximizes the likelihood among all distributions and $f_{\hat G}$ among all distributions in the family, we obtain
   \eqsplit[sand]{
   \summ j1\en \df_n(y_j)\log f_G(y_j)
   \le
   \summ j1\en \df_n(y_j)\log f_{\hat G}(y_j)
   \le
    \summ j1\en \df_n(y_j)\log \df_n(y_j).
    }
  Let $L_n$ be a sequence which is slowly growing to infinity, and which may be different from line to line in the proof. The difference between the RHS and the LHS of \eqref{sand} can be bounded  
  \eqsplit{
   0&\ge \summ j1\en \df_n(y_j)\log\Bigl(\frac{f_G(y_j)}{\df_n(y_j)}\Bigr)
   \\
   &= \sum_{f_G(y_j)> L_n/n} \df_n(y_j)\log\Bigl(\frac{f_G(y_j)}{\df_n(y_j)}\Bigr)
   + \sum_{f_G(y_j)<L_n/n} \df_n(y_j)\log\Bigl(\frac{f_G(y_j)}{\df_n(y_j)}\Bigr)
   \\
   &\ge -\sum_{f_G(y_j)> L_n/n} \bigl(f_G(y_j)-\df_n(y_j)\bigr)\ind(\df_n(y_j)>0)
   \\
   &\hspace{3em}-\sum_{f_G(y_j)> L_n/n}\frac{\bigl(\df_n(y_j)-f_G(y_j)\bigr)^2}{\df_n(y_j)}\ind(\df_n(y_j)>0)
    \\
    &\hspace{3em}-\xi({L_n}/{n})\log(n\xi^{-1}(1/L_nn)),\qquad \text{w.p}\to 1,
   \\
   &\ge  -\xi({L_n}/{n}) -
   -\frac{\nu(1/n)}{n},\qquad \text{w.p}\to 1,
       }
  since, by Assumption \eqref{ass:subgeom} the total mass with $f_G$ smaller than $L_n/n$ is $\xi(L_n/n)$,  with probability converging to 1, there are no $Y_i$s  with $f_G(Y_i)<\xi^{-1}(1/L_nn)$, and $\df_n/f_G=\OP({\sqrt{f_G/n}})$ on its support. Thus, with probability converging to 1:
   \eqsplit{
     &\hspace{-3em}\xi(L_n/n)+\frac{\nu(1/n)}{n}
     \\
     &\ge -\summ j1\en \df_n(y_j)\log\Bigl(\frac{f_{\hat G}(y_j)}{\df_n(y_j)}\Bigr) 
     \\
     &= -\summ j1\en \df_n(y_j)\log\Bigl(1+\al \frac{f_{\hat G}(y_j)-\df_n(y_j)}{\df_n(y_j)}\Bigr)  
     \\
     &= \summ j1\en f_{\hat G}(y_j)\ind(\df_n(y_j)=0)
     \\
     &\hspace{3em}+ \summ j1\en \frac{\bigl(f_{\hat G}(y_j)-\df_n(y_j)\bigr)^2}{\df_n(y_j)} \Bigl(\frac{\df_n(y_j)}{\al^* f_{\hat G}(y_j)+(1-\al^*)\df_n(y_j)}\Bigr)^2
     \\
     &\ge \OP({\eps})\sum_{j\in\scm(\eps)}\bigl(f_{\hat G}(y_j)-\df_n(y_j)\bigr)^2
    }
where $\scm(\eps)=\{j: \df_n(y_j)>\eps\}$. Note that by Assumption \eqref{ass:subgeom}: $|\scm(\eps)|=\OP({\nu(\eps)})$   and $\sum_{j\not\in\scm}f_G(y_j)=\OP({\xi(\eps)})$. Thus,
 \eqsplit{
    \summ j1\en &|f_{\hat G}(y_j)-f_{G}(y_j)|
    \\
    &\le 2\sum_{j\in\scm(\eps)} |f_{\hat G}(y_j)-f_{G}(y_j)| +2\sum_{j\not\in\scm(\eps)}f_{G}(y_j)
    \\
    &\le \sum_{j\in\scm(\eps)} |f_{\hat G}(y_j)-\df_n(y_j)| 
    + \sum_{j\in\scm(\eps)} |f_{G}(y_j)-\df_n(y_j)| +2\xi(\eps)
    \\
    &\le \bigl(1+\op(1)\bigr)\Bigl(\sum_{j\in\scm}\bigl(f_{\hat G}(y_j)-\df_n(y_j)\bigr)^2\nu(\eps)\Bigr)^{1/2}  +2\xi(\eps)
    \\
    &\hspace{3em}
    +\bigl(1+\op(1)\bigr)\Bigl(\sum_{j\in\scm}\bigl(f_{ G}(y_j)-\df_n(y_j)\bigr)^2\nu(\eps)\bigr)\Bigr)^{1/2} +2\xi(\eps)
    \\
    &\le \bigl(1+\op(1)\bigr)\Bigl(\sum_{j\in\scm}\bigl(f_{\hat G}(y_j)-\df_n(y_j)\bigr)^2\nu(\eps)\bigr)\Bigr)^{1/2} +\OP({\frac{\nu(\eps)}{n}})  +2\xi(\eps)
    \\
    &=\OP({\sqrt{\frac{\xi(L_n/n)}{\eps}+\frac{\nu(1/n)}{n\eps}}}) +\OP({\frac{\nu(\eps)}{n}}) +  \OP(\xi(\eps)). 
  }
  Since, $\nu(\eps)/n\le (n\eps)^{-1}\le(n\eps)^{-1/2}$, we obtain:
   \eqsplit{
    \|f_{\hat G}-f_G\|_{TV} &= \OP({\sqrt{\frac{\xi(L_n/n)}{\eps}+\frac{\nu(1/n)}{n\eps}}+\xi(\eps)}).
    }
   Since, for any $\eps>0$, $\nu(1/n)/n\to 0$, and $\xi(t)\to 0$, we obtain part 1. Part 2 follows with $\nu(\eps)=\O({|\log\eps}|)$ and $\xi(\eps)=\O({\eps})$ by Assumption \eqref{ass:subgeom}.

\end{proof}

\begin{example}
  Theorem \ref{thm:conv} does not ensure uniform convergence in triangular arrays. Consider that at stage $n$, we observe $n$ Poisson variables with mixing distribution uniform on the interval $(0,m)$, where $m=m_n> n^2$. Then, $Y$ itself has a distribution that is approximately uniform on the integers $1,2,\dots,m$. Thus the sample is sparse. Since a point mass at $k$ is the best explanation of the observation at $k$, and explains poorly any other observations, the SPMLE will be close to the empirical distribution of the $Y$. Thus, the SPMLE does not converge to the true distribution in total variation. However, scaled properly, it converges to the uniform distribution, and thus $\scl_{\hat G}(Y/m)\cweak \scl_{G}(./m)$ where $\scl_G$ is the cdf of $Y$ under $G$.
  
  Consider now an example where we have that the SPMLE converges weakly to the wrong distribution. Suppose, as before, that $G$ is the uniform distribution on $(0,m)$ and $Y$ is discrete. Given $\th$, $Y=\lfloor\th\rfloor$ with probability $\th/m$, where $\lfloor x\rfloor$ is the integer part of $x$. With the remaining probability of $1-\th/m$ it is distributed uniformly on the integers $\lfloor \th\rfloor+1,\dots,m$. Thus, the $Y$ has the so called star-shape density $f_G(y)=yg(y)+G(y)/m$, $y=0,1,\dots,m$, where $g(y)=G(y)-G(y-1)$. The first term of $f_G$ dominates since $yg(y)=\O(1)$ while $G(y)/m=\O({m^{-1}})$. Asymptotically, the SPMLE is  
   \eqsplit{
    \argmax_g\summ i1n \log(Y_ig(Y_i)+G(y)/m)&\approx \argmax_g\summ i1n \log (Y_i g(Y_i))
    \\
    & =\argmax_g\Bigl(\summ i1n \log(Y_i)+\summ i1n \log(g(Y_i))\Bigr) 
    \\
    & =\argmax_g\Bigl(\summ i1n \log(g(Y_i))\Bigr) ,
    }
     i.e., $\hat G$ is close to the distribution of $Y$ which is approximately $yG(y)/m$, i.e., there is an extra $y$ factor which will not disappear after rescaling.  This example is built upon the inconsistency of the star-shaped distribution on the unit interval, which we learned from Willem van Zwet. 
  
  On the positive side, suppose that given $\th$, $Y$ is Poisson with mean $\th$. Assume further that (i) The median of $G$ is greater than $\al>0$, and (ii) $\int \th^2 dG(\th)<\beta$. For any large $m$, the first condition established a lower bound on $\min_{j\le m}f_G(j)$, while the second condition establish a bound on $\summ jm\en f_G(j)\le 2^{-m/2}+4\beta/m^2$. Thus, $\xi(\eps)\to 0$ uniformly in this class, and the convergence in total variation is uniform over this class.
\end{example}

\subsection{The identifiable functionals}
\label{sec:ident}

\begin{Assumption}\label{ass:hfun}
Consider a sequence $\bby=y_1,y_2,\dots$ that is dense in the support of $f_G$. Let $f_G(\bby_{1:m})=\bigl(f_G(y_1),\dots, f_G(y_m)\bigr)$ and $f_G(\bby)=\bigl(f_G(y_1), f_G(y_2),\dots\bigr)$, where $f_G(\cdot)$ is the observed density. Suppose:
\begin{enumerate}[(i)]
  \item\label{ass:hfun1} There is a function $h:\scy^{\en}\to\R$ such that for all $G$, $\eta_G=h\bigl(f_G(\bby)\bigr)$.
  \item\label{ass:hfun2} There are continuous  functions, $h_m: \scy^m\to \R$, such that $h_m\bigl(f_G(\bby_{1:m})\bigr)\to h\bigl(f_G(\bby)\bigr)$.
  \item\label{ass:hfun3} For any sequence $\hat G_n$ and any $m=1,2,\dots$: $\bigl(f_{\hat G_n}(y_1),\dots,f_{\hat G_n}(y_m)\bigr) \cip \bigl(f_{G}(y_1),\dots,f_{G}(y_m)\bigr)$. 
\end{enumerate}
\end{Assumption}

Assumption \ref{ass:hfun} is intended to be applied to discrete distributions like the Poisson, but was written in a way that would hold more generally, for example, in the Gaussian shift model. Assumption \ref{ass:hfun}\eqref{ass:hfun1} is an application of the likelihood principle to an identified parameter. Assumptions \ref{ass:hfun}\eqref{ass:hfun2}\&\eqref{ass:hfun3} ensures that $h$ is not a tail property. 

\begin{theorem}\label{th:funct}
Suppose $Y$ is discrete and Assumption \ref{ass:hfun} holds. Then for any sequence $\hat G_n$ of SPMLEs, $\eta_{\hat G_n}\cip \eta_G$.
\end{theorem}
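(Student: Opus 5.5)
We prove the claim by a diagonal argument that combines the three parts of Assumption~\ref{ass:hfun}. Applying part \eqref{ass:hfun1} to the (random) distribution $\hat G_n$ as well as to $G$ gives
$\eta_{\hat G_n}=h\bigl(f_{\hat G_n}(\bby)\bigr)$ and $\eta_G=h\bigl(f_G(\bby)\bigr)$. Fix $\eps>0$ and an integer $m$. We decompose
\[
|\eta_{\hat G_n}-\eta_G|\le \bigl|h(f_{\hat G_n}(\bby))-h_m(f_{\hat G_n}(\bby_{1:m}))\bigr| +\bigl|h_m(f_{\hat G_n}(\bby_{1:m}))-h_m(f_G(\bby_{1:m}))\bigr| +\bigl|h_m(f_G(\bby_{1:m}))-h(f_G(\bby))\bigr| .
\]

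\emph{Third term.} By part \eqref{ass:hfun2} applied at the true $G$, this term is smaller than $\eps/3$ once $m\ge m_0(\eps)$.

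\emph{Middle term.} For the fixed $m$, part \eqref{ass:hfun3} gives $f_{\hat G_n}(\bby_{1:m})\cip f_G(\bby_{1:m})$. Since $h_m$ is continuous, the continuous mapping theorem makes the middle term $\op(1)$. Part \eqref{ass:hfun3} is stated for an arbitrary sequence, but it is in any case justified for SPMLEs: Theorem~\ref{thm:conv} gives $\|f_{\hat G_n}-f_G\|_{TV}\cip0$ and hence pointwise convergence at each $y_j$. Lemma~\ref{lem:main} guarantees that the choice among non-unique SPMLEs does not matter at the observed points; the total-variation statement covers all points.

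\emph{First term (main obstacle).} This term requires the approximation $h_m\to h$ to hold uniformly, or at least along the random sequence $f_{\hat G_n}$, whereas part \eqref{ass:hfun2} is stated only pointwise in $G$. I would resolve this in one of two ways.
\begin{itemize}
\item Read \eqref{ass:hfun2} as uniform convergence over a neighbourhood of $f_G(\bby)$ in the product (or $\ell_1$) topology. Then, with probability tending to one, $f_{\hat G_n}$ lies in that neighbourhood, using the total-variation convergence of Theorem~\ref{thm:conv}, and the first term is below $\eps/3$ for $m\ge m_0$.
\item Alternatively, note that $h$ is then continuous at $f_G(\bby)$ with respect to the product topology, as a locally uniform limit of functions each continuous in finitely many coordinates. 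Coordinatewise convergence in probability of $f_{\hat G_n}(\bby)$, from \eqref{ass:hfun3} for every $m$, then yields $h(f_{\hat G_n}(\bby))\cip h(f_G(\bby))$ directly. This is convergence in the metric $\sum_j 2^{-j}|x_j-x_j'|$.
\end{itemize}
The phrase in the text that ``$h$ is not a tail property'' is what licenses either reading.

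\emph{Conclusion.} Choose $m$ large enough that the first and third terms are each below $\eps/3$ with probability tending to one. Then let $n\to\infty$, so that the middle term is below $\eps/3$ with probability tending to one. Hence $P(|\eta_{\hat G_n}-\eta_G|>\eps)\to0$, which is the claim.
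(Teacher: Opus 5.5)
\textbf{There is a genuine gap.} It is the step you flagged yourself. The paper's proof has the same hole.

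Your decomposition follows the paper's proof. Fix $m$ by part (ii) at the true $G$. Then use part (iii) and continuity of $h_m$, with Lemma~\ref{lem:main} making $f_{\hat G_n}(\bby_{1:m})$ independent of the version of the SPMLE. This gives $h_m(f_{\hat G_n}(\bby_{1:m}))\cip h_m(f_G(\bby_{1:m}))$.

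The paper's proof stops there. It never bounds your first term $|h(f_{\hat G_n}(\bby))-h_m(f_{\hat G_n}(\bby_{1:m}))|$. What it actually establishes is that $h_m(f_{\hat G_n}(\bby_{1:m}))$ is within $2\eps$ of $\eta_G$ with probability tending to one. It does not show this for $\eta_{\hat G_n}=h(f_{\hat G_n}(\bby))$.

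You located this hole correctly, but you do not close it from the stated hypotheses. Your first bullet assumes $h_m\to h$ uniformly on a neighbourhood of $f_G(\bby)$. Your second bullet needs the resulting continuity of $h$ at $f_G(\bby)$ along the set $\{f_{G'}(\bby):G'\in\scg\}$. Part (ii) gives only pointwise convergence, and a pointwise limit of continuous functions need not be continuous. The informal remark that $h$ is ``not a tail property'' is not a hypothesis you can invoke. The uniformity, or the continuity of $h$, has to be added to Assumption~\ref{ass:hfun} explicitly.

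This is a real extra condition, not a matter of interpretation. Take the Poisson model with $\eta(\lambda)=e^{\lambda}$ and $\scg=\{G:\int e^{\lambda}dG(\lambda)<\infty\}$.
\begin{itemize}
\item Then $\eta_G=\sum_j 2^j f_G(j)$, so (i) and (ii) hold with $h_m$ the partial sums.
\item Part (iii) holds by the total-variation consistency of Theorem~\ref{thm:conv}.
\item Yet $G'=(1-\eps)G+\eps\delta_{\Lambda}$ with $\Lambda=2\log(1/\eps)$ satisfies $\sum_j|f_{G'}(j)-f_G(j)|\le 2\eps$, while $\eta_{G'}-\eta_G=\eps^{-1}-\eps\eta_G$.
\end{itemize}
So neither coordinatewise nor total-variation convergence of $f_{\hat G_n}$ controls $\eta_{\hat G_n}$ through $h$. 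Whether the SPMLE happens to avoid such perturbations is exactly what would have to be proved, and neither you nor the paper proves it.

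With the uniformity (or continuity of $h$ at $f_G(\bby)$) stated as an additional assumption, either of your bullets gives a complete proof. That proof is more careful than the paper's.
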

\begin{proof}
  For any $\eps>0$, fix $m$ such that $\Bigl|h_m\bigl(f_G(\bby_{1:m})\bigr)-h\bigl(f_G(\bby)\bigr)\Bigr|<\eps$. For $n$ large enough, $\{y_1,\dots,y_m\}\subseteq \{Y_1,\dots, Y_n\}$, and hence $f_{\hat G}(\bby_{1:m})$ does not depend on the version of the SPMLE, and by Assumption \ref{ass:hfun}\eqref{ass:hfun3} $h\bigl(f_{\hat G}(\bby_{1:m})\bigr)$ converges in probability to $h\bigl(f_{\hat G}(\bby_{1:m})\bigr)$.
\end{proof}

%\subsubsection{A few more examples}
\label{sec:moreExamples}

\begin{example}
  Suppose $Y\dist B(K,p)$. The functions that can be estimated directly are $f_G(0),\dots,f_G(K)$. However, $f_G(j)=\binom{K}{j}\E_G\bigl( p^j (1-p)^{K-j}\bigr) = \binom{K}{j}\summ mjK \binom{K-j}{m-j}(-1)^{m-j}\E_Gp^m.$ Thus, the functions that can be estimated are of the form $h\bigl(\mu_1(G),\dots,\mu_K(G)\bigr)$, where $\mu_j(G)$ is the $j$th moment of  $G$ and $h(\cdot)$ is a continuous function. 
  
  The SPMLE is not unique. This could be expected since the distribution of $Y$ belongs to a subset of the $K$ dimensional nonparametric family of all distributions supported on $0,\dots, K$. Suppose $\hat G$ is a SPMLE. Any
   \eqsplit{
    G\in \scg_n\eqdef \bigl\{G: \E_G(p^y(1-p)^{K-y})=\E_{\hat G}(p^y(1-p)^{K-y}),\;y\in\{Y_1,\dots,Y_n\}\bigr\}
    }
  is an SPMLE.  Note that $\scg_n\not\weakly G$.

  If $\eta(p)=p$, $\eta_G$ is easily estimated by $\bar Y_n$. Asymptotically, $\eta_{\hat G}=\bar Y_n$ for any SPMLE $\hat G$. However, this is not necessarily the case for finite samples. For examples, suppose $K=3$, and the sample obtained has $Y_i\in\{2,3\}$, $i=1,\dots,n$. Then, for any two SPMLEs, $\mu_2(\hat G_1)=\mu_2(\hat G_2)$ and  $\mu_3(\hat G_1)=\mu_3(\hat G_2)$, but it may be that  $\mu_1(\hat G_1)\ne\mu_1(\hat G_2)$, and hence $\eta_{\hat G_1}\ne\eta_{\hat G_2}$.
  
  Suppose the sample is truncated and only positive values of $Y$ are observed and counted. In this case, what can be estimated is only 
   \eqsplit{
    \frac{f_G(y)}{1-f_G(0)} &= \frac{\binom{K}{j}\summ mjK \binom{K-j}{m-j}(-1)^{m-j}\mu_G(m)}{\summ m1K (-1)^m \binom{K}{m} \mu_G(m)}.
    }
Thus, only functions of the form   $h\bigl(\mu_2(G)/\mu_1(G),\dots,\mu_K(G)/\mu_1(G)\bigr)$  for continuous $h(\cdot)$ can be estimated. In particular, neither $\E_G(p)$ nor $\var_p(p)$ can be estimated, but the coefficient of determination of $G$ can be.
\end{example}

\begin{example}\label{ex:pois}
  Suppose $Y\dist Po(\lm)$. Let $\scy_n=\{y_1,\dots,y_k\}$---the observed unique values, i.e., the minimal set such that $Y_i\in\scy_n$, $i=1,\dots,n$. If $\hat G$ is SPMLE, any 
   \eqsplit{
    G\in\scg_n\eqdef\{G:\; \int \lm^y e^{-\lm}dG(\lm)= \int \lm^y e^{-\lm}d\hat G(\lm), y\in\scy_n \}
    }
   is an SPMLE. Unlike the binomial case, since $\scy_n$ is asymptotically increasing to $\{0,1,2,\dots\}$, $\scg_n$ is shrinking towards the truth.
   
   Since $\int \lm^{j}e^{-\lm} dG(\lm)=j! f_G(j)$, this expectations can be estimated, and therefor $\eta_G$ for any function $\eta(\cdot)$ such that $\eta(\lm)e^{\lm}$ can be approximated by a power series. 
   \end{example}
   
   \begin{example}
   So far we considered mostly linear functions of the observed density. Consider the Poisson distribution of Example \ref{ex:pois}, except that only strictly positive values are observed, or, in other words, we consider a sample truncated at $Y>0$. In yet other words, we observe a sample from  
    \eqsplit{
        f_G(y) &= \frac{\int \lm^ye^{-\lm}dG(\lm) } {y!\bigl(1-\int e^{-\lm}dG(\lm)\bigr)}. 
     }
     
     The functions that can be directly estimated are $h_j(G)\equiv\int \lm^j e^{-\lm}dG(\lm)/\bigl(1-\int e^{-\lm}dG(\lm)\bigr)$, $j=1,2,\dots$.
   Suppose it is known that $G(\eps)=0$, for some small $\eps>0$. Suppose $\bigl\|\bigl(\summ j1M \al_{Mj}\lm^j-\lm^{-1}\bigr)e^{-\lm}\ind(\lm>\eps)\Bigr\|_\en\to 0$. Then $\summ j1M \al_{Mj}\hat h_j$ is an estimator of 
   \eqsplit{
        \frac{\int e^{-\lm}dG(\lm)} {1-\int e^{-\lm}dG(\lm)} &= \frac{f_G(0)}{1-f_G(0)} ,
   }
   and $f_G(0)$ can be estimated by
    \eqsplit{
        \hat f_G(0)&= \frac{\summ j1M \al_{Mj}\hat h_j} {1+\summ j1M \al_{Mj}\hat h_j} .
     }

  \end{example}

\begin{example}\label{ex:poest}
  Suppose at each unit we sample a Poisson number of subjects, and check the employment status of the $K$ sampled subjects. Let $X$ be the number of employed. We want to estimate the marginal probability of being employed. Thus, $f_G(x,k)=\binom{k}{x}\frac{1}{k!}\dint p^{x}(1-p)^{k-x}\lm^{k}e^{-\lm}dG(p,\lm)$, for $0\le x\le k$ and $k\ge 0$, and $\eta_G=\dint pdG(p,\lm)$. We should prove that $\eta_{\hat G}$ is a consistent estimator. The difficulty in this case is that we have no unbiased information on $p_i$ in units with $K_i=0$. Thus, we have only biased information in units with $\lm_i$ small, and since $\lm$ and $p$ may be correlated, the information in the sampled subjects is biased.
  
  Let
   \eqsplit{
    \ti f_G(k) &= \summ x0k \frac{x}{k}f_G(x,k)
    \\
    &= \int p \frac{\lm^k}{k!}e^{-\lm}dG(p,\lm),\qquad k=1,2,\dots.  
    }
It follows that $\summ k1\en \ti f_G(k)=\int p(e^{\lm}-1)e^{-\lm}dG(p,\lm)=\eta_G - \int p e^{-\lm}dG(p,\lm)$. To argue that $\eta_G$ can be estimated by $\eta_{\hat G}$ we can argue that $\int p e^{-\lm}dG(p,\lm)$ is in the closure of the  span of $\{\ti f_{G}(k):, k=1,2,\dots\}$. Note that,
 \eqsplit{
   \summ k1{m+1}\al_k \ti f_G(k) &= \int p \lm e^{-\lm} \summ j0{m} \al_{j+1}\frac{k^j}{(j+1)!} dG(p,\lm)
   \\
   &=  \int p \lm e^{-\lm} Q_m(k) dG(p,\lm)
  }
for some polynomial $Q_m$. Thus we need to approximate in an appropriate sense $\lm^{-1}e^{\lm}$ by a polynomial. 

Let $u_\gamma(\lm)=\lm^{-1}e^{\lm-\gamma\lm-\gamma\lm^2}$. Then, $u_\gamma$ is bounded, square integrable, and $$\lim_{\gamma\to 0}\int p\lm e^{-\lm}u_\gamma(\lm) dG(p,\lm)=\eta_G.$$ For every $\gamma>0$, $u_\gamma(\cdot)$ can be approximated by its projection  $Q_{\gamma m}$ on the span of the Laguerre's polynomials of degree not larger than $m$. The Laguerre's polynomials are a family of orthogonal polynomials with respect to the weight function $e^{-\lm}$ on the positive reals,  such that 
 \eqsplit{
    \lim_{m\to\en} \int \bigl(u_\gamma(\lm)-Q_{\gamma m}(\lm)\bigr)^2 e^{-\lm}d\lm=0.
  }
Thus, there is a sequence $\gamma_m\dec 0$ such that  
 \eqsplit{
    \lim_{m\to\en}\int p\lm e^{-\lm}Q_{\gamma_m,m}(\lm)dG(p,\lm) &= \eta_G.
  }
Which can be mapped to a sequence $\al_{mk}$ such that
 \eqsplit{
    h_m(f_G(1),\dots, f_G(m)) &\equiv \summ k1m \al_{mk}\ti f_G(k)\to \eta_G.
  }
  
By Theorem \ref{th:funct}, we do not need to exhibit the approximation polynomials $Q_{\gamma_m,m}$ or the sequence $h_m$ explicitly to argue that $\eta_{\hat G}\cip \eta_G$.

\end{example}

\begin{example}
  We consider the same setup as in Example \ref{ex:poest}, except that we have a truncated sample and do not observe when it was supposed to be $K=0$. Thus, 
    \eqsplit{
    \ti f_G(k) &= \summ x0k \frac{x}{k}f_G(x,k)
    \\
    &= \frac{\int p \frac{\lm^k}{k!}e^{-\lm}dG(p,\lm)} {1-\int e^{-\lm}dG(p,\lm) }, \qquad k=1,2,\dots.  
    }
 The same linear combination as in Example \ref{ex:poest}  yields   
 \eqsplit[etatrunc]{
    \lim_{m\to\en}\summ k1m \al_{mk}\ti f_G(k) &= \frac{\eta_G} {1-\int e^{-\lm}dP(p,\lm)}.
  }
However, similar  polynomials, $Q^*_{\al_m,m}(\lm)$ and similar procedures applied to the marginal density of $K$ to approximate $\lm^{-1}$ (without the exponent), satisfy
 \eqsplit{
    \lim_{m\to\en}\summ k1m \al^*_{mk} \summ x0k f_G(x,k) &= \frac {\int e^{-\lm}dP(p,\lm)} {1-\int e^{-\lm}dP(p,\lm)}.
  }
Thus, the denominator in \eqref{etatrunc} can be estimated using the observable, and, hence, $\eta_G$ can be approximated as the limit of the following functions of the densities:
 \eqsplit{
    h_m(f_G(1),\dots, f_G(m)) &\equiv \Bigl(\summ k1m \al_{mk}\summ x0k \frac xk f_G(x,k)\Bigr) \Bigl(1+\summ k1m \al^*_{mk} \summ x0k f_G(x,k)\Bigr).
  }
  
\end{example}

\subsection{Confidence intervals and partially identifiable models}
\label{sec:CI}

Consider the situation of Example \ref{ex:GR}. The probabilities of the observed $Y_i$ are proportional to $\pi^{k}(1-\pi)^{\kappa-k}p^j(1-p)^{k-j}$, $k=0,\dots,\kappa$ and $j=0,\dots,k$. Thus, the SPMLE is not unique, and the only functionals that can be estimated are functions of  $\E_G(\pi^k p^{j})$, $k=1,\dots,\kappa$ and $j=0,\dots,k$. In particular $\eta_G\equiv\E_G(p)$ is not identifiable. However, not all possible values of $\eta_G$ are consistent with the data. This becomes a partially identified model, see Chesher and Rosen(2015) and Ho and Rosen(2017).

One can ask the following two questions:
\begin{enumerate}
  \item What are $\inf\{\eta_{\hat G}:\; \hat G \text{ is SPMLE}\}$ and $\sup\{\eta_{\hat G}:\; \hat G \text{ is SPMLE}\}$?
  \item What are the minimal and maximal values of $\eta_G$ that the data cannot reject?
\end{enumerate}

By Lemma \ref{lem:main}, given a single realization of the SPMLE, all other SPMLEs are defined by the linear constraints and, thus, the first question can be answered using standard linear programming:
 \eqsplit{
    &\text{Find } \min_G \pm \int \eta(\theta) dG(\theta)
    \intertext{s.t.}
    &\int f(y_i\mid \theta)dG(\theta)=f_{\hat G}(y_i),\quad i=1,\dots,n,
    \\
    &dG(\theta)\geq0.
  }
  
Alternatively, it can be as easily answered by adding a linear constraint to the SPMLE equation: 
 \eqsplit{
    &\text{Let }\ell(G)=\sum_{i=1}^{n}\log f_G(Y_i)
    \text{ and }\hat G_\eta=\argmax_G \ell(G)\text{ s.t. } \eta_G=\eta.
    \\
    &\text{Find: } \eta_G^{+}=\sup \{\eta: \ell(\hat G_\eta)=\max_G\ell(G)\} \;\&\;
     \eta_G^{-}=\inf \{\eta: \ell(\hat G_\eta)=\max_G\ell(G)\}.
  }
 A modification of this program answers the second question, Following Wilks's Theorem, we consider
  \eqsplit{
    \eta_G^{U}&=\sup \{\eta: \ell(\hat G_\eta)>\max_G\ell(G)-\frac12\chi^2_\al\} \
    \\
     \eta_G^{L}&=\inf \{\eta: \ell(\hat G_\eta)>\max_G\ell(G)-\frac12\chi^2_\al\},
   }
 where $\chi^2_\al$ is the $1-\al$ upper quantile to the $\chi^2$ distribution with one degree of freedom.

The simple algorithm for calculating the MLE in a mixing model is the EM algorithm. The support of $G$ is discretized, and we compute the MLE supported on that grid. The E step is the usual, where each observation is spread over the support of $G$ according to the posterior distribution, assuming $G$. Let $M$ be the pseudo-sample over the support. The M step for the MLE is 
 \eqsplit{
    \argmax_g \sum_m M_m\log g(\theta_m)\quad\text{ s.t. }\sum_m g (\theta_m)=1 .
  }
We modify it to   
 \eqsplit{
    \argmax_g \sum_m M_m\log g (\theta_m)\quad\text{ s.t. }\sum_m g (\theta_m)=1 \;\&\; \sum_m \eta(\theta_m)g (\theta_m)=\eta .
  }
  The solution of the M step is
   \eqsplit{
    g(\theta_m)&=\frac{M_m}{n+\gamma\eta(\theta_m)}.
    }
 We solve in parallel the MLE and the constraint problem where we use the above solution, where $\gamma$ is adapted to ensure a difference of $\chi^2_\al/2$ between the two likelihoods. See Algorithm \ref{alg:em} for details.

\begin{algorithm}[ht]
\label{alg:em}
\caption{EM with partially identified mean}
\KwIn{Observations: $Y_1,\dots,Y_n$, Grid: $\theta_1,\dots,\theta_m$, Level, $\al_0$}

\For{$\al=0,-\al_0,\al_0$}{$g(\al,\cdot)\gets\ind_m/m$
    \\ $\gamma(\cdot)\gets 0,-\gamma_0,\gamma_0$}
    
\While{Change $>$  TINY}
{
\For{$\al=0,-\al_0,\al_0$}
{
\For {a=1:m}
    {$M(a)\gets\summ i1n \frac{f(Y_i\mid\theta_a) g(\al,a)}{\summ b1m f(y_i\mid\theta_b)g(\al,b)}$
    \\
    $g(\al,a)\gets \frac{M(a)}{n+\gamma(\al) \eta(a)}$}
    $\ell(\al)\gets \summ i1n \log\bigl(\summ a1m g(\al,a)f(Y_i\mid\theta_a)\bigr)$    
}
\For{$\al=-\al_0,\al_0$}{$\gamma(\al)\gets \gamma(\al)e^{\ell(0)-\ell(\al)-\chi^2_{1,\al_0}}$}
}   
\KwOut{g}.
\end{algorithm}

Consider now a truncated situation. In this case, the algorithm should be modified. The Markov kernel taking from $\theta$ to $Y$ is $f^{A}(y\mid\theta)$ as is in \eqref{eq:trun}. The latent values of $\theta$ in the sample are not taken from $G$ but from $G^A$. Thus, using the right kernel $f^A$, the EM algorithm yields $G^A$, and 
 \eqsplit{
    \eta_G = \frac{\int \frac{\eta(\theta)}{P_\theta(A)}dG^A(\theta)} {\int \frac{1}{P_\theta(A)}dG^A(\theta)}.
  }

\begin{example}\label{ex:sim}
  
We present now a simulation study in which we revisit the censored case of Example \ref{ex:GR} using the approach of Section \ref{sec:CI}. 
The probabilities of the different observations are functions of $\E(p^x(1-p)^{k-x})\pi^{k}(1-\pi)^{\kappa-k}$, $0\le x\le k\le \kappa$. Thus the functionals that can be identified are only functions of $p^{x}\pi^{k}$, for the same range of $x$ and $k$. In particular, $\E(p)$ is not identified. However, the simplex of distribution functions satisfying the above moment conditions is a strict subset of all distributions, and the range of possible $\E(p)$ is limited. Our approach enables a simple modification of the EM algorithm to identify the extreme value of $\eta_G$ in this simplex without the need to identify it implicitly.

We consider four sets of parameters $(\al,\beta)$, see Table \ref{tab:sim}. For each set, $M=1000$ simulations were done, where at simulation $s$, $\pi\dist Be(\al,\beta)$, and   $p\mid\pi \dist Be(6\xi_s, 6(1-\xi_s)$, where $\xi_s=(1-s/M)\pi+(s/M)(1-\pi)$. The sample size was 1000 in all cases. We implemented an EM algorithm described in Algorithm \ref{alg:em} in Section \ref{sec:CI} with a $100\times100$ grid. The percentage of observations with $K=0$, and thus missing response, is given in Table \ref{tab:sim}. This probability is the width of the naive interval, obtained by assigning either 0 or 1  to all subjects with $K=0$ and thus with missing $X$. Whenever the percentage of missing data was not negligible, the MLE-based confidence band was much smaller. A detailed presentation of the simulations is given in Figure \ref{fig:sim}.
In Figure \ref{fig:profile} we give the profile log-likelihood of $\E(p)$.

\begin{table}[th]
  \centering
  \begin{tabular}{|r|rr|r|r|}
    \hline
    % after \\: \hline or \cline{col1-col2} \cline{col3-col4} \dots
      $\kappa$ & $\al$ & $\beta$ & $P(K=0)$ & Mean CI width \\
    \hline 
  2 &  1 &  4 &  0.667 &   0.41 \\
  5 &  1 &  4 &  0.444 &   0.24 \\
  5 &  4 &  1 &  0.008 &   0.02 \\
 10 &  3 & 12 &  0.179 &   0.07  \\
    \hline
  \end{tabular}
  \caption{Mean confidence width (over 1000 simulations) and mean naive confidence bound}\label{tab:sim}
\end{table}

\begin{figure}[th]
  \centering
  \includegraphics[width=0.8\textwidth]{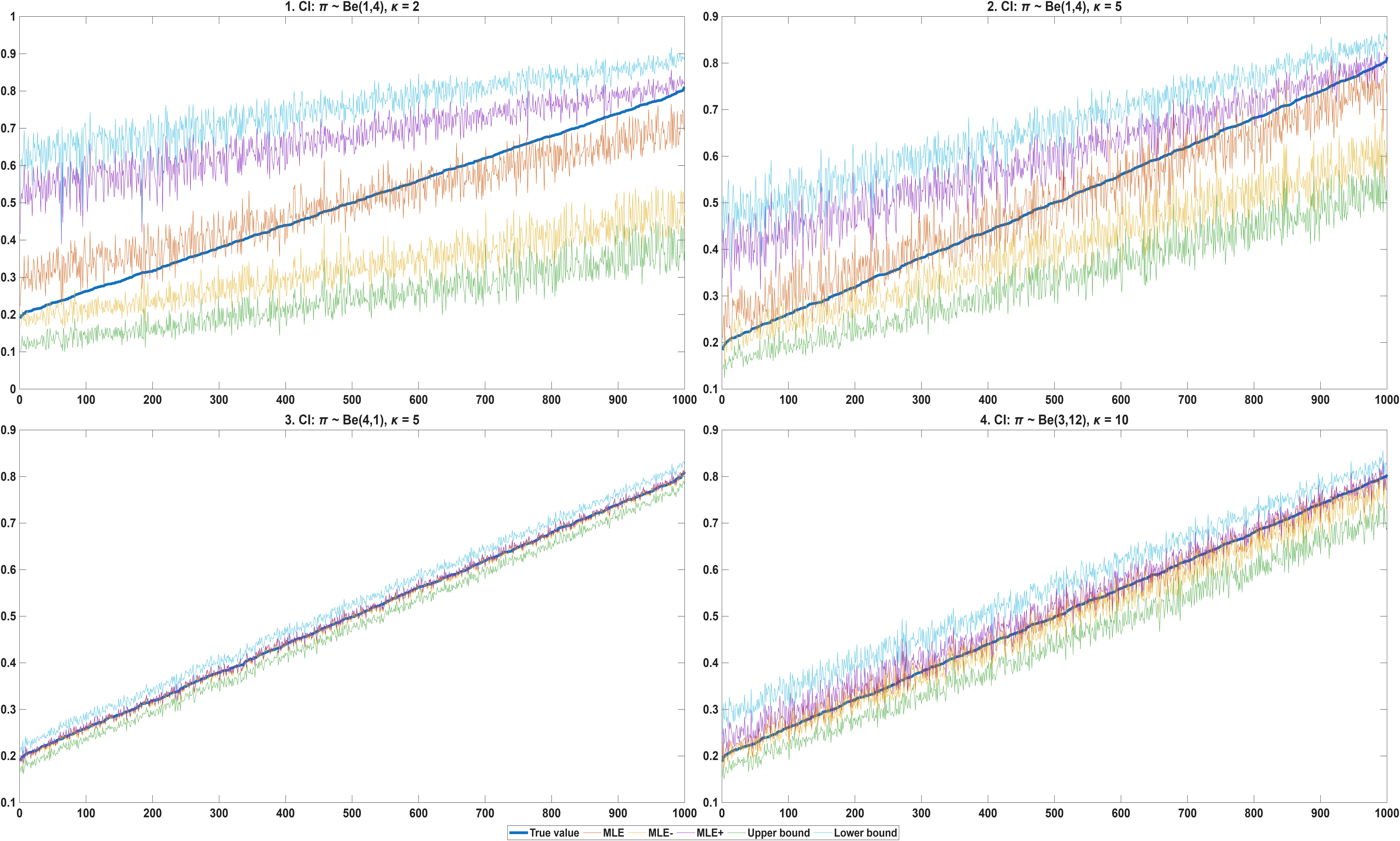}
  \caption{Four sets of simulations for missing data. The wide line is the true mean of the latent $p$. The thin lines are, from top to bottom, the upper confidence, the upper range of the MLE, the MLE---the output of the EM algorithm, the lower range of the MLE, and the lower confidence bound. }\label{fig:sim}
\end{figure}

\begin{figure}[th]
  \centering
  \includegraphics[width=0.8\textwidth]{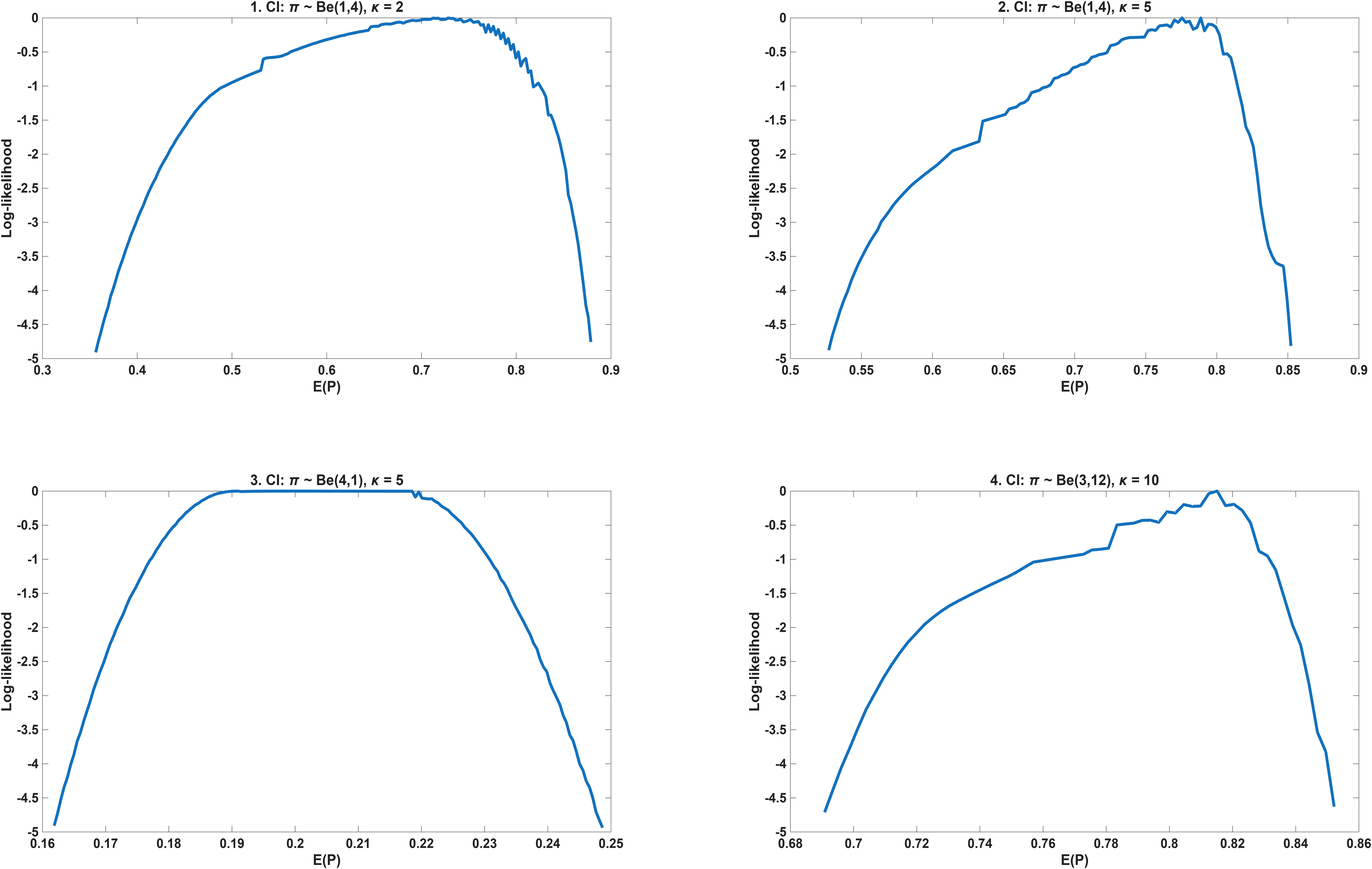}
  \caption{Profile likelihood for the 4 simulation conditions.}\label{fig:profile}
\end{figure}
\end{example}

\begin{example}
\label{ex:data}
\begin{comment}
Years 2011--2015.
Cut: of no more than 50 complaints.
3060 companies out of 3588 (0.85) in the datasets.
Responsible for 25629 out of 503670 (0.05).
Correlation between X & K: 0.28.
The proportions with K==1 and K==2 are 0.28 and 0.13. Estimated missing: 0.2 and 0.8.
The naive estimator is 0.054 (0.048,0.061).
MLE, 0.068 (0.047,0.111)
\end{comment}

  We apply our method to the record of consumer complaints against financial institutions in 2011--2015, which can be found at \url{https://www.consumerfinance.gov/data-research/consumer-complaints}. We considered only the small companies with fewer than 50 complaints during the five years. There were in total 503,670 complaints against 3588 companies; 3060 (85\%) of them were small, but these small companies are responsible for only 5\% of the complaints. Against 28\% of the small companies, there was only one complaint, and against 13\% of the institutions, there were two. Thus, there is a substantial number of potential companies for which no complaints were given. In fact, we estimate that 76\% of the potential companies are with de facto no complaints. Applying the missing species formula of Good and Toulmin (1956) gives that against 20\% of the companies would have the first complaints in the following five-year period. 
  
  The question we asked is what the average response rate of a company is with monetary relief. Thus, the units are the companies and not the customers or the complaints. It is reasonable to assume that the number of complaints against a company is correlated with the way it treats its customers (there is a 0.28 correlation between the number of complaints and the number of those resolved with monetary relief. The data was truncated by us at 50 complaints, but there are only 2\% more companies in the range of 51--60. 
  
  The data is truncated, and we observe only companies with at least one complaint. We model the data as if the number $K$ of complaints against a company is $Po(\lm)$, and given $K$, $X(K,p)$ of them end with monetary relief. We assume that $(\lm,p)$ are unobserved and follow an unknown distribution $G$.  We apply the modified EM algorithm for truncated data as described above. Since both $p$ and $\lm$ are relatively small, we consider $G$ to be supported on a $500\times 500$ equally spaced grid on the logarithmic scale.
   
   We estimated the probability of monetary relief by  $0.068$ with an uncertainty interval of $ (0.047,0.111)$. The naive estimator, the mean of the observed $X_i/K_i$, equals $0.054$, which is in the range, but it gives a false certainty of confidence interval of   $(0.048,0.061)$, which is actually all below the MLE.
\end{example}

\section{ Future work on weighted averages and  incorporating covariates.} \label{sec:weighted}

It may be of interest to estimate
weighted averages. For example, when the strata are not of equal size, and the population average is not a simple average of the strata's averages. Similarly, this arises when the sampling of strata or of the individuals is not
performed with
equal probabilities.
Suppose the relative weights are $\gamma_1, \dots \gamma_n$, and it is desired to estimate $\sum \gamma_i \eta(\theta_i)$, e.g., population average rather than simple average of strata's averages.

We now sketch a plausible approach. 
The idea is to introduce additional random variables and  corresponding parameters that will incorporate the weights
$\gamma_i$.  We demonstrate this through Example 3.

Suppose the size of stratum $i$ is $N_i$. Following a super-population model, we treat $N_i$ as
a realization of a  Poisson random variable with parameter $\lambda _i$. 
Hence, we observe $Y_i=(X_i,K_i, N_i)$.
Now, conditional  on $\theta_i=(\pi_i, p_i,\lambda_i)$,
$(X_i,K_i)$ follow the same  distribution as before,  while $N_i \sim Po(\lambda_i)$ is independent of 
$(X_i,K_i)$. We define  $\eta(\theta_i)= p_i \lambda_i$. 

The above approach might be helpful more generally to handle empirical Bayes problems involving covariates. 
By treating the covariates as realizations
from a super-population, distributed according to some parametric family, we create 
``apparent exchangeability" among the observations $Y_i$. This, in turn, strengthens the appeal of the NPEB approach.

\newpage

{\bf \Large References:}

\begin{list}{}{\setlength{\itemindent}{-1em}\setlength{\itemsep}{0.5em}}

\item Chesher, A., and Rosen, A. (2015). Characterizations of identified sets delivered by structural econometric models (No. CWP63/15). Cemmap working paper.

\item
Chen, J. (2017).  ``Consistency of the MLE under Mixture Models.'' Statist. Sci. 32 (1) 47 - 63.
\item
Eckles, D., Ignatiadis, N., Wager, S., and Wu, H. (2025). Noise-induced randomization in regression discontinuity designs. Biometrika  112(2).
\item
Dobronyi, C.,  Gu, J., and K. I. Kim (2021). Identification of dynamic panel logit models with fixed effects.  arXiv:2104.04590 (econ)
%Dicwer, L. and Zhao, S. (2014). Nonparametric empirical Bayes and maximum likelihood
%estimation for high-dimensional data analysis. ArXiv preprint arXiv:1
%\item
%Efron, B. (2019). Bayes, Oracle Bayes, and empirical Bayes. {\it Stat. Science}, {\bf 34},
%Number 2, 177-201.
%Greenshtein, E. and Ritov, Y. (2004). Persistence in high-dimensional linear predictor selection and the virtue of overparametrization. {\it Bernoulli}, Vol 10, Number 6 (2004), 971-988.
%\item
%Greenshtein, E. (2006). Best subset selection, persistence in high-dimensional statistical learning and optimization under l1 constraint. {\it Ann.Stat.}
%Volume 34, Number 5, 2367-2386.
\item
Efron, B. (2012). Large scale inference: empirical Bayes  methods for estimation , testing and prediction. Cambridge University Press.
\item 
Efron, B. (2014). Two modeling strategies for empirical Bayes estimation. Statistical science: a
review journal of the Institute of Mathematical Statistics, 29(2):285.
\item
Efron, B. and Thisted, R. (1976). Estimating the number of unseen species: How many words did Shakespeare know? Biometrika, 63(3), 435-447.
\item 
Good, I. J. and Toulmin, G. H. (1956). The number of new species, and the increase in population coverage, when a sample is increased. Biometrika, 43(1-2), 45-63.
\item
Greenshtein, E. and Itskov, T (2018), Application of Nonarametric  Empirical
Bayes to Treatment of Non-Response. {\it Statistica Sinica} 28 (2018), 2189-2208.
\item
Greenshtein, E. and Ritov, Y. (2022). Generalized maximum likelihood estimation of the mean of parameters of mixtures. with applications to sampling and to observational studies. {\it EJS} 16(2): 5934-5954.

\item Ho, K., and Rosen, A. M. (2017). Partial identification in applied research: benefits and challenges. In Advances in economics and econometrics: eleventh world congress (Vol. 2, pp. 307-359). Econometric Society Monographs.
    
%Gu, J. and Koenwer, R. (2017). Unobserved Heterogeneity in Income Dynamics: An Empirical Bayes Perspective. {\it Journal of Business and Economics Statistics}. Volume 35, 2017 - Issue 1
%\item
%Heinrich, Philippe, and Jonas Kahn. ”Strong identifiability and optimal minimax rates for finite mixture
%estimation.” Annals of Statistics 46.6A (2018): 2844-2870.
%\item
 %Ignatiadis Nikolaos and Wager Stefan (2021). Confidence Intervals for Nonparametric Empirical Bayes Analysis. To appear in JASA.
%\item
%Jiang, W. and Zhang, C.-H., (2009), General maximum likelihood empirical Bayes estimation of normal means.
%{\it Ann.Stat.}. {\bf 37} No. 4, 1647-1684.
%\item
%Karlis, D. and Xekalaki, E. (2005), Mixed Poisson Distributions.
%{\it International Statistical Review}, 73, 1, 35–58, Printed in Wales by Cambrian Printers.
\item
Kiefer, J. and Wolfowitz, J. (1956). Consistency of the maximum likelihood estimator in the presence of infinitely many incidental parameters. {\it Ann.Math.Stat.}
27 No. 4, 887-906.
\item  Koenker, R. and Gu, J. (2026). Empirical Bayes: Some Tools, Rules, and Duals. CAMBRIDGE University Press.
\item
Koenker, R. and Mizera, I. (2014). Convex optimization, shape constraints,
compound decisions and empirical Bayes rules. {\it JASA } 109, 674-685.
\item
Laird, N. (1978). Nonparametric maximum likelihood estimation of a mixing distribution. {\it JASA}  78, No 364, 805-811.
\item
Lindsay, B. G. (1995). Mixture Models: Theory, Geometry and Applications.
Hayward, CA, IMS.
\item
Little, R.J.A and Rubin, D.B. (2002). Statistical Analysis with Missing Data.
New York: Wiley
%\item
%Long, Feng and Lee, H. Dicwer (2018).
%Approximate nonparametric maximum likelihood for mixture models: A convex optimization approach to fitting arbitrary multivariate mixing distributions. {\it Computational Statistics \& Data Analysis} 122: 80-91.
%\item
%Robins, J. M. and  Ritov, Y (1997). Toward a curse of dimensionality appropriated
%(CODA) asymptotic theory for semi-parametric models. {\it Statistics in Medicine},
%16:285–319.
%\item
%Rosenbaum, P. R. and Rubin, D. B. (1983). The central role of the propensity score in observational studies for causal effects. {\it Biometrika,} Vol 70, Issue 1, April 1983, Pages 41–55.
%\item
%Teicher, H. (1963). Identifiability of finite mixtures. {\it Ann. Math. Stat.},
%{\bf 34}, No. 4, 1265-1269.
%\item
%Wolter, K. M. (1985). Introduction to Variance estimation. Second edition, Springer.
%\item
%J.Pfanzagl (1993). Incidental Versus Random Nuisance Parameters. {\it Ann Stat}, 21, 1663-1691.
%\item
%Sujayam Saha. Adityanand Guntuboyina. "
%``On the nonparametric maximum likelihood estimator for Gaussian location mixture densities with application to Gaussian denoising.'' Ann. Statist. 48 (2) 738 - 762, April 2020. https://doi.org/10.1214/19-AOS1817
\item
Robbins, H. (1951). Asymptotically subminimax solutions of compound decision problems. In Proceedings of the Second Berweley Symposium on Mathematical Statistics and
Probability, 1950 131–148. Univ. California, Berweley. MR0044803
\item
Robbins, H. (1956). An empirical Bayes approach to statistics. In Proc. Third Berweley
Symp. 157–164. Univ. California Press, Berweley. \\MR0084919
\item
Robbins, H. (1964). The empirical Bayes approach to statistical decision problems. Ann.
Math. Statist. 35 1–20. MR0163407
\item
Robbins, H. and Zhang, C-H. (1991). Estimating multiplicative  treatment effect under biased allocation. Biometrika 78(2) 349-354.
%Zhang, C.-H. (1997). Empirical Bayes and compound estimation of a normal mean.
%statist. Sinica 7 181–193.
%\item
%Zhang, C-H. (2005). Estimation of sums of random variables: Examples and information bounds. {\it Ann. Stat.} {\bf 33}, No.5. 2022-2041.
\item
Vardi, Y. (1985). Empirical distribution in selection bias  models.  Ann.Stat. 13  (1), 178-203.
\item
Turnbull, B.  W. (1976). The empirical distribution function with arbitrarily grouped, censored and truncated data.
jrssb (38) 3, 290-295. 
\item
Zhang, C-H. (2003)  Compound decision theory  and empirical Bayes  methods. Ann. Stat.   31 (2),  379-390.

\end{list}

\end{document}